\begin{document}\newtheorem{thm}{Theorem}
\newtheorem{cor}[thm]{Corollary}
\newtheorem{lem}{Lemma}
\theoremstyle{remark}\newtheorem{rem}{Remark}
\theoremstyle{definition}\newtheorem{defn}{Definition}
\title{Inequalities for Variation Operator}
\author{Sakin Demir}
\maketitle
\begin{abstract}Let $f$ be a measurable function defined on $\mathbb{R}$. For each $n\in\mathbb{Z}$ define the operator $A_n$ by
$$A_nf(x)=\frac{1}{2^n}\int_x^{x+2^n}f(y)\, dy.$$
Consider the variation operator
$$\mathcal{V}f(x)=\left(\sum_{n=-\infty}^\infty|A_nf(x)-A_{n-1}f(x)|^s\right)^{1/s}$$
for $2\leq s<\infty$.\\
It has been proved in \cite{jkw1} that  $\mathcal{V}$ is of strong type $(p,p)$ for $1<p<\infty$ and is of weak type $(1,1)$, it maps $L^\infty$ to BMO. We first provide a completely different proofs for these known results and in addition we prove that  $\mathcal{V}$ maps $H^1$ to $L^1$.  Furthermore,  we prove that it satisfies vector-valued weighted strong type and weak type inequalities. As a special case it follows that $\mathcal{V}$ satisfies weighted strong type and weak type inequalities.
\end{abstract}
{\bf{Mathematics Subject Classification:}} 26D07, 26D15, 42B20.\\
{\bf{Key Words:}} Variation Operator, $A_p$ Weight, Weak Type $(1,1)$, Strong Type $(p,p)$, $H^1$ Space, BMO Space.\\

\noindent
{\bf{Introduction:}}\\
\noindent
Let $f$ be a measurable function defined on $\mathbb{R}$. For each $n\in\mathbb{Z}$ define the operator $A_n$ by
$$A_nf(x)=\frac{1}{2^n}\int_x^{x+2^n}f(y)\, dy.$$
It is a well known problem to study the different kinds of convergence of the sequence $\{A_nf\}_n$ when $f\in L^p(\mathbb{R})$ for some $1\leq p<\infty$. \\
Consider the variation operator
$$\mathcal{V}f(x)=\left(\sum_{n=-\infty}^\infty|A_nf(x)-A_{n-1}f(x)|^s\right)^{1/s}$$
for $2\leq s<\infty$.\\
Analyzing the boundlessness of the variation operator $\mathcal{V}f$ is a method of measuring the speed of convergence of the sequence $\{A_nf\}$.\\

If a postive function $w\in L_{\text{loc}}^1(\mathbb{R})$ satisfies the following condition we say that $w$ is an $A_p$ weight for some $1<p<\infty$:\\
$$\sup_I\left(\frac{1}{|I|}\int_Iw\, dx\right)\left(\frac{1}{|I|}\int_Iw^{-\frac{1}{p-1}}\, dx\right)^{p-1}<\infty ,$$
where the supremum is taken over all intervals $I$ in $\mathbb{R}$.\\
$w$ is an $A_\infty$ weight if there exists $\delta >0$ and $\epsilon >0$ such that for any measurable $E\subset Q$,
$$|E|<\delta\cdot |Q|\implies w(E)<(1-\epsilon )\cdot w(Q).$$
Here
$$w(E)=\int_Ew$$
It is well known and easy to see that $w\in A_p\implies w\in A_\infty$ if $1<p<\infty$.\\
We say that $w\in A_1$ if there is a positive constant $C$ such that
$$\frac{1}{|Q|}\int_Qw(x)\, dx\leq Cw(x)$$
for a.e. $x\in Q$.\\

We say that an operator $T:L^p(X)\to L^p(X)$ maps $L^p(w)$ to itself if there is a postive constant $C$ such that
$$\int_{\mathbb{R}}|Tf(x)|^pw(x)\, dx\leq C\int_{\mathbb{R}}|f(x)|^pw(x)\, dx$$
for all $f\in L^p$.\\
We say that  $T$ is of strong type $(p,p)$ if there exists a positive constanr $C$ such that
$$\|Tf\|_p\leq C\|f\|_p$$
for all $f\in L^p(X)$. $T$ is of weak type $(1,1)$ (or satiesfies a weak type $(1,1)$ inequality) if there exists a positive constant $C$ such that
$$\mu \{x:|Tf(x)|>\lambda\}\leq \frac{C}{\lambda}\|f\|_1$$
for all $f\in L^1(X)$.\\
 We say that the kernel $K$ satisfies $D_r$ condition for $1\leq r<\infty$, and write $K\in D_r$, if there exists a sequence $\{c_l\}_{l=1}^\infty$ of positive numbers $\sum_lc_l<\infty$ and any $l\geq 2$ and $x>0$,
$$\left(\int_{S_l(x)}\|K(x-y)-K(-y)\|_{l^s}^r\, dy\right)^{1/r}\leq c_l|S_l(x)|^{-1/{r^\prime}},$$
where $S_l(x)=(2^lx, 2^{l+1}x)$.\\

When $K\in D_1$ we have the Hörmander condition:
$$\int_{\{y>4x\}}\|K(x-y)-K(-y)\|_{l^s}\, dy\leq C$$
where $C$ is a positive constant which does not depend on $y\in\mathbb{R}$.
Let $A$ and $B$ be Banach spaces. A linear operator $T$ mapping $A$-valued functions into $B$-valued functions is called a singular integral operator of convolution type if the following conditions are satisfied:
\begin{enumerate}[(i)]
\item $T$ is a bounded operator from $L_A^q(\mathbb{R})$ to $L_B^q(\mathbb{R})$ for some $q$, $1\leq q\leq\infty.$
\item There exists a kernel $K\in D_1$ such that 
$$Tf(x)=\int K(x-y)\cdot f(y)\,dy$$ 
for every $f\in L_A^q(\mathbb{R})$ with compact support and for a.e. $x\notin{\rm{supp}}(f)$.
\end{enumerate}

Given a locally integrable function $f$ we define the sequence-valued operator $T$ as follows:
\begin{align*}
Tf(x)&=\{A_nf(x)-A_{n-1}f(x)\}_n\\
&=\left\{\int_{\mathbb{R}}\left(\frac{1}{2^n}\chi_{(-2^n,0)}(x-y)-\frac{1}{2^{n-1}}\chi_{(-2^{n-1},0)}(x-y)\right)f(y)\, dy\right\}_n\\
&=\int_{\mathbb{R}}K(x-y)\cdot f(y)\, dy,
\end{align*}
where $K$ is the sequence-valued function
$$K(x)=\left\{\frac{1}{2^n}\chi_{(-2^n,0)}(x)-\frac{1}{2^{n-1}}\chi_{(-2^{n-1},0)}(x)\right\}_n.$$
It is clear that 
$$\|Tf(x)\|_{l^s}=\mathcal{V}f(x).$$
\begin{lem}\label{ksdr}The kernel operator
$$K(x)=\left\{\frac{1}{2^n}\chi_{(-2^n,0)}(x)-\frac{1}{2^{n-1}}\chi_{(-2^{n-1},0)}(x)\right\}_n$$
satisfies $D_r$ condition for $r\geq 1$.
\end{lem}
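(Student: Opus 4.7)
The plan is to make the kernel difference completely explicit, discover that for each fixed $y$ only two consecutive indices contribute to $\|K(x-y)-K(-y)\|_{l^s}$, and then combine a small-support estimate with a pointwise bound.

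Fix $x>0$, $l\geq 2$, and $y\in S_l(x)=(2^l x,2^{l+1}x)$. Then $y>4x$, and in particular $y/(y-x)<4/3<2$. Using $\chi_{(-2^n,0)}(-y)=\mathbf{1}_{\{y<2^n\}}$ and $\chi_{(-2^n,0)}(x-y)=\mathbf{1}_{\{y-x<2^n\}}$, the $n$-th coordinate reduces to
\begin{align*}
K_n(x-y)-K_n(-y)=2^{-n}\mathbf{1}_{\{y-x<2^n\leq y\}}-2^{-(n-1)}\mathbf{1}_{\{y-x<2^{n-1}\leq y\}}.
\end{align*}
Since $\log_2 y-\log_2(y-x)<1$, the interval $(\log_2(y-x),\log_2 y]$ contains at most one integer $n_0$; the analogous condition for the second indicator forces $n=n_0+1$. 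Hence exactly two consecutive indices can contribute, each of magnitude $2^{-n_0}$, giving $\|K(x-y)-K(-y)\|_{l^s}=2^{1/s}\cdot 2^{-n_0}$.

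The next step is to control both the support size and the pointwise value. The set of $y$ with nonzero difference is $\bigcup_{n_0}[2^{n_0},2^{n_0}+x)$, and because the anchor points $2^{n_0}$ grow geometrically, at most two such intervals meet $S_l(x)$, contributing total measure at most $2x$. On the support, $2^{n_0}>y-x>\frac{3}{4}y>\frac{3}{4}\cdot 2^l x$, so $\|K(x-y)-K(-y)\|_{l^s}\leq C\,2^{-l}/x$. Combining these,
\begin{align*}
\left(\int_{S_l(x)}\|K(x-y)-K(-y)\|_{l^s}^r\,dy\right)^{1/r}\leq (2x)^{1/r}\cdot C\,2^{-l}/x=C'\cdot 2^{-l/r}\cdot|S_l(x)|^{-1/r'},
\end{align*}
where the last equality uses $|S_l(x)|=2^l x$ and $1-1/r=1/r'$. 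Setting $c_l=C'\cdot 2^{-l/r}$ gives $\sum_l c_l<\infty$ for every finite $r\geq 1$, which is the $D_r$ condition.

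The main obstacle is the bookkeeping in the first step: identifying the unique pair of consecutive indices that contribute and seeing that their values have equal magnitude. Once this cancellation structure is unravelled, the remaining argument is a short measure-of-support calculation plus a geometric-series comparison with $|S_l(x)|^{-1/r'}$.
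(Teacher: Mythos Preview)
Your proof is correct and follows essentially the same route as the paper: both arguments show that for $y\in S_l(x)$ the difference $K(x-y)-K(-y)$ has exactly two nonzero (consecutive) coordinates of size $2^{-n_0}$, yielding $\|K(x-y)-K(-y)\|_{l^s}=2^{1/s}\cdot 2^{-n_0}$ on a set of measure $\lesssim x$, and then obtain $c_l=C\,2^{-l/r}$. Your use of $\log_2 y-\log_2(y-x)<1$ to pin down $n_0$ is a compact replacement for the paper's case-by-case verification that $\phi_n(x-y)-\phi_n(-y)$ vanishes unless $n=j$, but the underlying mechanism and the final estimate are the same.
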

\begin{proof} Let $x_0\in\mathbb{R}$ and $i\in\mathbb{Z}$ be given, consider $x$ and $y$ in $\mathbb{R}$ such that $x_0<x\leq x_0+2^i$ and $x_0+2^j<y\leq x_0+2^{j+1}$ with $j>i$. Let $\phi_n(y)=\chi_{(-2^n,0)}(y)$. Then $\phi_n(x-y)-\phi_n(x_0-y)=0$ unless $n=j$ in which case 
$$\phi_j(x-y)-\phi_j(x_0-y)=\chi_{(x_0+2^j, x+2^j)}(y).$$
To see this first it is clear that $\phi_n(x-y)=\chi_{(x, x+2^n)}(y)$. Now if $n<i$ then
 $$x+2^n<x-x_0+x_0+2^i\leq x_0+2^j<y.$$
 Thus $\phi_n(x-y)=0$. Clearly, the same holds for $\phi_n(x_0-y)$. If $i\leq n<j$ then 
 $$x+2^n\leq x_0+2^i+2^n\leq x_0+2\cdot 2^n\leq x_0+2^j,$$
 and $\phi_n(x-y)=\phi_n(x_0-y)=0$. If $n>j$ then 
 $$x+2^n>x_0+2^n\geq x_0+2^{j+1}\geq y,$$
 and since $y>x>x_0$ we have
 $$\phi_n(x-y)-\phi_n(x_0-y)=1-1=0.$$
 Finally, if $n=j$ then
 $$\phi_j(x_0-y)=\chi_{(x_0,x_0+2^j)}(y)=0,$$
 while
 $$\phi_j(x-y)=\chi_{(x,x+2^j)}(y)=1$$
 whenever
 $$x_0+2^j\leq y\leq x+2^j.$$
We now have
\begin{align*}
\|K(x-y)-K(x_0-y)\|_{l^s}^s&=\sum_n\left|\frac{1}{2^n}\phi_n(x-y)-\frac{1}{2^{n-1}}\phi_{n-1}(x-y)\right.\\
&\quad -\left.\left(\frac{1}{2^n}\phi_n(x_0-y)-\frac{1}{2^{n-1}}\phi_{n-1}(x_0-y)\right)\right|^s\\
&=\sum_n\left|\frac{1}{2^n}\phi_n(x-y)-\frac{1}{2^n}\phi_n(x_0-y)\right.\\
&\quad -\left.\left(\frac{1}{2^{n-1}}\phi_{n-1}(x-y)-\frac{1}{2^{n-1}}\phi_{n-1}(x_0-y)\right)\right|^s\\
&=2\left|\frac{1}{2^j}\phi_j(x-y)-\frac{1}{2^j}\phi_j(x_0-y)\right|^s\\
&=2\left|\frac{1}{2^j}\chi_{(x_0+2^j, x+2^j)}(y)\right|^s.
\end{align*}
Thus we get
$$\|K(x-y)-K(x_0-y)\|_{l^s}=2^{1/s}\left|\frac{1}{2^j}\chi_{(x_0+2^j, x+2^j)}(y)\right|.$$
Given $x$, choose an integer $i$ such that $2^{i-1}\leq x <2^i$. By using our previous observation we obtain
\begin{align*}
\left(\int_{2^lx}^{2^{l+1}x}\|K(x-y)-K(-y)\|_{l^s}^r\, dy\right)^{1/r}\qquad\qquad\qquad\qquad\qquad\qquad\\
\leq \left(\int_{2^{l+i-1}}^{2^{l+i}}\|K(x-y)-K(-y)\|_{l^s}^r\right)^{1/r}\\
+ \left(\int_{2^{l+i}}^{2^{l+i+1}}\|K(x-y)-K(-y)\|_{l^s}^r\right)^{1/r}\quad\quad\\
\leq 2^{2/s}\frac{2^{i/r}}{2^{l+i}}=C2^{-l/r}|S_l(x)|^{-1/{r^\prime}}\qquad\quad
\end{align*}
and this completes our proof.
\end{proof}
It is easy to check that the Fourier transform of the kernel of our vector-valued operator $T$ is bounded and since we also have $D_1$ condition we deduce that $T$ is a singular integral operator of convolution type.
\begin{lem}\label{singmaps} A singular integral operator $T$ mapping $A$-valued functions into $B$-valued functions can be extended to an operator  defined in all $L_A^p$, $1\leq p<\infty$, and satisfying
\begin{enumerate}[(a)]
\item \label{sininta} $\|Tf\|_{L_B^p}\leq C_p\|f\|_{L_A^p},\quad 1<p<\infty,$
\item \label{sinintb} $\|Tf\|_{WL_B^1}\leq C_1\|f\|_{L_A^1},$
\item \label{sinintc} $\|Tf\|_{L_B^1}\leq C_2\|f\|_{H_A^1},$
\item \label{sinintd}$\|Tf\|_{{\rm{BMO}}(B)}\leq C_3\|f\|_{L^{\infty}(A)},\quad f\in L_c^{\infty}(A),$
\end{enumerate}
where $C_p,C_1,C_2,C_3>0.$
\end{lem}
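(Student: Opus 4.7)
The plan is to deploy vector-valued Calderón--Zygmund theory in the Benedek--Calderón--Panzone style. The two inputs are the initial $L_A^q\to L_B^q$ boundedness and the $D_1$ condition on $K$, which together imply the vector-valued Hörmander estimate $\int_{|y-x_0|>2|x-x_0|}\|K(x-y)-K(x_0-y)\|_{l^s}\,dy\leq C$. All four conclusions are then derived by adapting the standard scalar arguments to the $A$- and $B$-valued setting.

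For (b), fix $\lambda>0$ and apply a Calderón--Zygmund decomposition of $\|f(\cdot)\|_A$ at height $\lambda$, obtaining $f=g+\sum_j b_j$ where $\|g(x)\|_A\leq C\lambda$ a.e., each $b_j$ is $A$-valued, supported in a disjoint interval $I_j$, has vanishing integral, and $\sum_j|I_j|\leq C\lambda^{-1}\|f\|_{L_A^1}$. The good part is controlled by Chebyshev together with the $L^q$ hypothesis; outside $\bigcup_j 2I_j$ one uses $\int b_j=0$ to write
\[Tb_j(x)=\int_{I_j}\bigl(K(x-y)-K(x-c_j)\bigr)\cdot b_j(y)\,dy\]
and then invokes the Hörmander estimate. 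Marcinkiewicz interpolation between the resulting weak $(1,1)$ bound and the strong $(q,q)$ hypothesis yields (a) for $1<p\leq q$; the range $p>q$ is handled either by duality or by a sharp maximal function estimate of the form $(Tf)^{\sharp}(x)\leq C\,M(\|f\|_A^r)(x)^{1/r}$ combined with the Fefferman--Stein inequality.

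For (c), use the atomic decomposition of $H_A^1$ and reduce matters to showing $\|Ta\|_{L_B^1}\leq C$ uniformly in $A$-valued atoms $a$ supported in an interval $I$ with $\|a\|_{L_A^\infty}\leq |I|^{-1}$ and $\int a=0$. On $2I$ apply Cauchy--Schwarz together with the $L^2$ estimate from (a); on $(2I)^c$ insert the mean $K(x-c_I)\cdot\int_I a$ and invoke Hörmander. For (d), proceed by duality: for $f\in L_c^\infty(A)$ and $g\in H_{B^*}^1$ write $\langle Tf,g\rangle=\langle f,T^*g\rangle$, where the adjoint $T^*$ has kernel $K^*(x)=K(-x)^*$ and satisfies the same structural hypotheses with the roles of $A,B$ and of $l^s,l^{s'}$ interchanged; applying (c) to $T^*$ yields $|\langle Tf,g\rangle|\leq C\|f\|_{L_A^\infty}\|g\|_{H_{B^*}^1}$, which by $H^1$--BMO duality places $Tf$ in $\mathrm{BMO}(B)$ with the stated norm bound.

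I expect the main technical hurdle to be the bad-part portion of (b): the scalar cancellation argument must be carried out with the kernel difference measured in the sequence norm $\|\cdot\|_{l^s}$, which is fine once $D_1$ is invoked but requires care in interchanging the summation over $n$ and the integration in $y$. Once the vector-valued weak $(1,1)$ bound is in hand, the remaining three conclusions are routine interpolation and atomic/duality maneuvers resting on the very same Hörmander estimate supplied by Lemma \ref{ksdr}.
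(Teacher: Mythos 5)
The paper does not actually prove this lemma: its ``proof'' is a citation to Rubio de Francia, Ruiz and Torrea (Adv.\ Math.\ 62 (1986)), where these four estimates are established for operator-valued Calder\'on--Zygmund kernels. So your proposal supplies an argument where the paper supplies none, and the argument you sketch is in essence the standard one from that reference: Calder\'on--Zygmund decomposition of $\|f(\cdot)\|_A$ plus the H\"ormander condition coming from $D_1$ for the weak $(1,1)$ bound, interpolation for $1<p\leq q$, atoms for $H^1_A\to L^1_B$, and an endpoint argument at $L^\infty$. That is the right skeleton and would be accepted as a proof sketch of the cited theorem.

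Two caveats are worth flagging, since the lemma is stated for arbitrary Banach spaces $A$, $B$. First, the alternative you offer for the range $p>q$, the pointwise bound $(Tf)^{\sharp}(x)\leq C\,M(\|f\|_A^{r})(x)^{1/r}$, does not follow from $D_1$ alone; it needs an $L^{r'}$-H\"ormander-type condition ($D_r$ with $r>1$) or the $L^\infty\to\mathrm{BMO}$ endpoint, so as a proof of the abstract lemma it is circular unless you first prove (d). Second, your duality routes --- $T^*$ for $p>q$ and the pairing with $H^1_{B^*}$ for (d) --- implicitly use $(L^{q}_B)^*=L^{q'}_{B^*}$ and $(H^1_{B^*})^*=\mathrm{BMO}(B)$, which require geometric hypotheses (Radon--Nikod\'ym property, reflexivity) that hold in the paper's application ($A$ scalar, $B=l^s$ with $1<s<\infty$) but not for general Banach spaces. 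The cleaner ordering, and the one in the cited source, is to prove (b) and (d) directly --- for (d) split $f=f\chi_{2I}+f\chi_{(2I)^c}$ on each interval $I$, using $L^q$ boundedness for the near part and H\"ormander for the far part --- then obtain (a) on the full range by interpolating between the weak $(1,1)$ estimate, $L^q$, and the $\mathrm{BMO}$ endpoint, and finally get (c) by the atomic argument you describe. With that reordering your outline is correct and matches the known proof; as written, the two duality steps are the genuine gaps in full generality.
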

\begin{proof}See J. L. Rubio de Francia \textit{et al}~\cite{jlrdffjrjlt}.
\end{proof}
Our first result is the following:
\begin{thm}\label{infvars}The variation operator $\mathcal{V}f$ maps $H^1$ to $L^1$  for $2\leq s<\infty$.
\end{thm}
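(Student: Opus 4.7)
The plan is to deduce the theorem directly from Lemma~\ref{singmaps}(c) applied to the $l^s$-valued operator $T$ introduced just before Lemma~\ref{ksdr}. Since $\|Tf(x)\|_{l^s}=\mathcal{V}f(x)$ pointwise, the inequality $\|Tf\|_{L^1(l^s)}\le C\|f\|_{H^1}$ furnished by that lemma is exactly the desired bound $\|\mathcal{V}f\|_1 \le C\|f\|_{H^1}$. Thus the only thing to check is that $T$ falls into the class of singular integral operators of convolution type defined in the introduction, with $A=\mathbb{C}$ and $B=l^s$.

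This amounts to two verifications. First, the $D_1$ hypothesis on the kernel is Lemma~\ref{ksdr} specialized to $r=1$. Second, one needs $L^q$-boundedness of $T$ for some $q$, and I would take $q=2$. Writing $K_n(x)=2^{-n}\chi_{(-2^n,0)}(x)-2^{-(n-1)}\chi_{(-2^{n-1},0)}(x)$, a direct Fourier computation gives the symbol estimate $|\widehat{K_n}(\xi)|\lesssim \min(|\xi|2^n,(|\xi|2^n)^{-1})$. Splitting the sum over $n$ at the dyadic scale $2^n\approx 1/|\xi|$ and using that $s\ge 2$, one sees that $\bigl(\sum_n|\widehat{K_n}(\xi)|^s\bigr)^{1/s}$ is bounded uniformly in $\xi$, so the vector-valued Plancherel identity yields $\|Tf\|_{L^2(l^s)}\le C\|f\|_2$. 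With both items of the definition satisfied, Lemma~\ref{singmaps}(c) applies and the theorem follows at once.

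The main analytic content has already been absorbed into Lemma~\ref{ksdr} and the general extension result cited as Lemma~\ref{singmaps}, so no genuine obstacle remains; the only explicit calculation left is the symbol estimate above. If anything, the small subtlety worth flagging is that one must use the \emph{vector-valued} Plancherel identity, so that $L^2$-boundedness of $T$ reduces to an $l^s$-bound on the symbol rather than a scalar bound. Everything else is straightforward bookkeeping between the operator-theoretic setup and the pointwise identity $\|Tf(x)\|_{l^s}=\mathcal{V}f(x)$.
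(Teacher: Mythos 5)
Your proposal is correct and follows essentially the same route as the paper: the paper also deduces the theorem by noting that $T$ is a singular integral operator of convolution type (the $D_1$ condition from Lemma~\ref{ksdr} plus boundedness of the kernel's Fourier transform, which the paper leaves as ``easy to check'') and then applying Lemma~\ref{singmaps}\,(c) together with the identity $\|Tf(x)\|_{l^s}=\mathcal{V}f(x)$. Your only slight imprecision is the appeal to a ``vector-valued Plancherel identity'' in $l^s$: Plancherel is available only for the Hilbert-space case, so one should bound $\bigl(\sum_n|\widehat{K_n}(\xi)|^2\bigr)^{1/2}$ (which your estimate $|\widehat{K_n}(\xi)|\lesssim\min(|\xi|2^n,(|\xi|2^n)^{-1})$ gives at once) and then use $\|\cdot\|_{l^s}\leq\|\cdot\|_{l^2}$ for $s\geq 2$ to pass to $L^2(l^s)$.
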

\begin{proof}We have proved that the operator 
\begin{align*}
Tf(x)&=\left\{\int_{\mathbb{R}}\left(\frac{1}{2^n}\chi_{(-2^n,0)}(x-y)-\frac{1}{2^{n-1}}\chi_{(-2^{n-1},0)}(x-y)\right)f(y)\, dy\right\}_n\\
&=\int_{\mathbb{R}}K(x-y)\cdot f(y)\, dy,
\end{align*}
is a singular integral operator. Since also $\|Tf(x)\|_{l^s}=\mathcal{V}f(x)$ applying Lemma~\ref{singmaps}\,(c) to our operator $T$  shows that there exists a constant $C>0$ such that 
$$\|\mathcal{V}f\|_1\leq C\|f\|_{H^1}$$
for all $f\in H^1$.
\end{proof}
\begin{rem}It is clear that our argument also provides a different proof for the following known facts (see \cite{jkw1}):
\begin{enumerate}[(a)]
\item  $\|\mathcal{V}f\|_p\leq C_p\|f\|_p,\quad 1<p<\infty,$
\item $\|\mathcal{V}f\|_{WL^1}\leq C_1\|f\|_1,$
\item $\|\mathcal{V}f\|_{{\rm{BMO}}}\leq C_2\|f\|_{\infty},\quad f\in L_c^{\infty}(\mathbb{R}),$
\end{enumerate}
where $C_p,C_1,C_2>0.$
\end{rem} 
\begin{lem}\label{vecapfsing}Let $T$ be a singular integral operator with kernel $K\in D_r$, where $1<r<\infty$. Then, for all $1<\rho <\infty$, the weighted inequalities
$$\left\|\left(\sum_j\|Tf_j\|_B^{\rho}\right)^{1/\rho}\right\|_{L^p(w)}\leq C_{p,\rho}(w)\left\|\left(\sum_j\|f_j\|_A^{\rho}\right)^{1/{\rho}}\right\|_{L^p(w)}$$
hold if $w\in A_{p/r^\prime}$ and $r^\prime\leq p<\infty$, or if $w\in A_p^{r^\prime}$ and $1<p\leq r^\prime$. Likewise, if $w(x)^{r^\prime}\in A_1$, then the weak type inequality
\begin{align*}
w\left(\left\{x:\left(\sum_j\|Tf_j(x)\|_B^{\rho}\right)^{1/\rho}>\lambda\right\}\right)\qquad\qquad\qquad\qquad\qquad\qquad\\
\qquad\qquad \leq C_{\rho}(w)\frac{1}{\lambda}\int\left(\sum_j\|f_j(x)\|_A^{\rho}\right)^{1/\rho}w(x)\, dx
\end{align*}
holds.
\end{lem}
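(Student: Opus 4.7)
The plan is to reduce the vector-valued inequalities to the scalar case by a Banach-space lift and then to derive the weighted bounds from a sharp-maximal-function estimate adapted to the $D_r$ condition. Regarding $\vec f=(f_j)_j$ as a single function valued in $\ell^\rho(A)$ and $(Tf_j)_j$ as one valued in $\ell^\rho(B)$, the componentwise action of $T$ defines a singular integral of convolution type between these Banach spaces whose scalar kernel is again $K$; both the underlying $L^q$-boundedness and the $D_r$ condition transfer immediately to the lifted operator. Hence it suffices to prove the stated strong and weak type inequalities for a scalar singular integral whose kernel lies in $D_r$.

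The central pointwise estimate to establish is
$$M^\#(Tf)(x)\leq C\,M_{r^\prime}f(x),\qquad M_{r^\prime}f(x)=\bigl(M(|f|^{r^\prime})(x)\bigr)^{1/r^\prime}.$$
For a ball $B$ containing $x$, I would split $f=f\chi_{2B}+f\chi_{(2B)^c}$; the local part is handled by the unweighted $L^{r^\prime}$-boundedness of $T$ combined with Hölder, while the tail is decomposed into the dyadic annuli $S_l$ and $D_r$ is invoked with Hölder exponents $(r,r^\prime)$ on each annulus. Summability of the constants $c_l$ then collapses the annular sum to a multiple of $M_{r^\prime}f$ at the center of $B$. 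Granting this, the strong-type bound for $p\geq r^\prime$ is immediate from the Fefferman-Stein inequality $\|g\|_{L^p(w)}\leq C\|M^\# g\|_{L^p(w)}$ (valid since $A_{p/r^\prime}\subset A_\infty$) together with the fact that $M_{r^\prime}$ is bounded on $L^p(w)$ iff $w\in A_{p/r^\prime}$. The range $1<p\leq r^\prime$ is reached by duality: the transpose kernel $K(-x)$ again lies in $D_r$, and the class $A_p^{r^\prime}$ is the dual of the appropriate $A_{q/r^\prime}$ class under $w\leftrightarrow w^{1-p^\prime}$.

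For the weighted weak-type inequality, assuming $w^{r^\prime}\in A_1$, I would perform a Calderón-Zygmund decomposition of $f$ at height $\lambda$ to obtain $f=g+\sum_i b_i$ with each $b_i$ supported in a cube $Q_i$ of a pairwise disjoint family. The good part is controlled by the unweighted $L^{r^\prime}$-boundedness of $T$, Chebyshev, and the $A_1$ property of $w^{r^\prime}$, which converts an $L^{r^\prime}(w^{r^\prime})$ bound into the required weak $L^1(w)$ bound. Off $\bigcup_i 2Q_i$ the bad part is estimated using the $D_r$ condition and the cancellation of each $b_i$, Hölder with exponents $(r,r^\prime)$ once more pairing against $w^{r^\prime}$; the measure of $\bigcup_i 2Q_i$ relative to $w$ is absorbed by doubling, which follows from $w\in A_\infty$. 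The main obstacle will be Step 2, the sharp-function estimate with the $r^\prime$-maximal function on the right. The delicate point is pairing the Hölder exponents $(r,r^\prime)$ across dyadic annuli so that the $D_r$ constants $c_l$ absorb the natural scaling and the annular sum reassembles into a single $M_{r^\prime}f$ rather than a larger maximal function; once this estimate is in place, the remaining weighted theory proceeds along classical lines and the vector-valued extension contributes only bookkeeping.
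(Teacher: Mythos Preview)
The paper does not actually prove this lemma: its entire proof is the single line ``See J.~L.~Rubio de Francia \textit{et al.}'' and a citation. Your proposal, by contrast, sketches the substance of that cited argument --- the $\ell^\rho$-lift to reduce to the scalar case, the pointwise sharp-maximal estimate $M^\#(Tf)\le C\,M_{r'}f$ obtained by splitting into a local piece and dyadic annuli controlled via $D_r$ and H\"older with exponents $(r,r')$, then Fefferman--Stein plus the $A_{p/r'}$ bound for $M_{r'}$ on the range $p\ge r'$, duality for $1<p\le r'$, and a Calder\'on--Zygmund decomposition with the $A_1$ hypothesis on $w^{r'}$ for the weak type. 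This is indeed the strategy of the Rubio de Francia--Ruiz--Torrea paper, so your outline is correct and strictly more informative than the paper's own treatment. One small point to tighten: the claim that the $L^q$-boundedness ``transfers immediately'' to the $\ell^\rho$-lifted operator is not automatic from the scalar bound alone --- it is the unweighted vector-valued inequality for Calder\'on--Zygmund operators (Benedek--Calder\'on--Panzone, or the $\ell^\rho$ extension in the same Rubio de Francia paper) and should be invoked explicitly; once that is in hand, the rest of your plan goes through as written.
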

\begin{proof}See J. L. Rubio de Francia \textit{et al}~\cite{jlrdffjrjlt}.
\end{proof}
Our next result is the following:
\begin{thm}\label{vecapfvar}Let $2\leq s<\infty$. Then, for all $1<\rho <\infty$, the weighted inequalities
$$\left\|\left(\sum_j(\mathcal{V}f_j)^{\rho}\right)^{1/\rho}\right\|_{L^p(w)}\leq C_{p,\rho}(w)\left\|\left(\sum_j|f_j|^{\rho}\right)^{1/{\rho}}\right\|_{L^p(w)}$$
hold if $w\in A_{p/r^\prime}$ and $r^\prime\leq p<\infty$, or if $w\in A_p^{r^\prime}$ and $1<p\leq r^\prime$. Likewise, if $w(x)^{r^\prime}\in A_1$, then the weak type inequality
\begin{align*}
w\left(\left\{x:\left(\sum_j(\mathcal{V}f_j(x))^{\rho}\right)^{1/\rho}>\lambda\right\}\right)\qquad\qquad\qquad\qquad\qquad\qquad\\
\qquad\qquad \leq C_{\rho}(w)\frac{1}{\lambda}\int\left(\sum_j|f_j(x)|^{\rho}\right)^{1/\rho}w(x)\, dx
\end{align*}
holds.
\end{thm}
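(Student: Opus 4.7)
The plan is to recognize that Theorem~\ref{vecapfvar} is an immediate consequence of Lemma~\ref{vecapfsing} once the correct Banach-space identifications are made. I would take $A=\mathbb{R}$ with $\|\cdot\|_A=|\cdot|$ and $B=\ell^s$ with $\|\cdot\|_B=\|\cdot\|_{\ell^s}$. The operator $Tf(x)=\{A_nf(x)-A_{n-1}f(x)\}_n$ then maps $A$-valued functions to $B$-valued functions, and the identity $\|Tf(x)\|_{\ell^s}=\mathcal{V}f(x)$ noted in the text guarantees that $\mathcal{V}f_j(x)=\|Tf_j(x)\|_B$ pointwise.

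Next I would verify that the hypotheses of Lemma~\ref{vecapfsing} are in force for this $T$. By Lemma~\ref{ksdr} the sequence-valued kernel $K$ lies in $D_r$ for every $r\geq 1$; combined with the boundedness of $\widehat{K}$ observed just after Lemma~\ref{ksdr}, this confirms that $T$ is a singular integral operator of convolution type from scalar-valued to $\ell^s$-valued $L^q$ functions (for, say, $q=2$), with kernel in $D_r$ for any $1<r<\infty$ we wish to fix. The weight hypotheses on $w$ in the theorem statement then match exactly the hypotheses in Lemma~\ref{vecapfsing} for that same $r$.

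The theorem now follows by substitution. For the strong-type assertion, applying Lemma~\ref{vecapfsing} to the sequence $\{f_j\}$ yields
\[
\left\|\left(\sum_j\|Tf_j\|_{\ell^s}^{\rho}\right)^{1/\rho}\right\|_{L^p(w)}\leq C_{p,\rho}(w)\left\|\left(\sum_j|f_j|^{\rho}\right)^{1/\rho}\right\|_{L^p(w)},
\]
and replacing $\|Tf_j\|_{\ell^s}$ by $\mathcal{V}f_j$ on the left-hand side produces precisely the desired inequality. The weak-type bound is obtained in exactly the same way from the weak-type clause of Lemma~\ref{vecapfsing}, applied under the hypothesis $w^{r^\prime}\in A_1$.

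There is no genuine obstacle here: all the hard work has already been carried out in establishing the $D_r$ condition (Lemma~\ref{ksdr}) and in the abstract weighted vector-valued bounds for singular integrals (Lemma~\ref{vecapfsing}). The only point that requires a moment of attention is the bookkeeping of the Banach-space pair $(A,B)=(\mathbb{R},\ell^s)$, so that the abstract norm $\|Tf_j\|_B$ on the left-hand side of Lemma~\ref{vecapfsing} is recognized as the concrete variation $\mathcal{V}f_j$ that appears in Theorem~\ref{vecapfvar}.
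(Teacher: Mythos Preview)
Your proposal is correct and follows essentially the same route as the paper: identify $T$ as a singular integral operator with kernel $K\in D_r$ (via Lemma~\ref{ksdr}), use $\|Tf(x)\|_{\ell^s}=\mathcal{V}f(x)$, and invoke Lemma~\ref{vecapfsing}. The only difference is that you make the Banach-space identifications $(A,B)=(\mathbb{R},\ell^s)$ explicit, which the paper leaves implicit.
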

\begin{proof}We have proved that the operator 
\begin{align*}
Tf(x)&=\left\{\int_{\mathbb{R}}\left(\frac{1}{2^n}\chi_{(-2^n,0)}(x-y)-\frac{1}{2^{n-1}}\chi_{(-2^{n-1},0)}(x-y)\right)f(y)\, dy\right\}_n\\
&=\int_{\mathbb{R}}K(x-y)\cdot f(y)\, dy,
\end{align*}
is a singular integral operator and its kernel operator $K$ satisfies $D_r$ condition for $1\leq r<\infty$. Since also $\|Tf(x)\|_{l^s}=\mathcal{V}f(x)$ applying Lemma~\ref{vecapfsing} to our operator $T$ gives the result of our theorem.
\end{proof}
In particular we have the following corollary:
\begin{cor}Let $2\leq s<\infty$. Then the weighted inequalities
$$\left\|\mathcal{V}f\right\|_{L^p(w)}\leq C_{p,\rho}(w)\left\|f \right\|_{L^p(w)}$$
hold if $w\in A_{p/r^\prime}$ and $r^\prime\leq p<\infty$, or if $w\in A_p^{r^\prime}$ and $1<p\leq r^\prime$. Likewise, if $w(x)^{r^\prime}\in A_1$, then the weak type inequality
\begin{align*}
w\left(\left\{x:\mathcal{V}f(x)>\lambda\right\}\right)\leq C_{\rho}(w)\frac{1}{\lambda}\int |f(x)| w(x)\, dx
\end{align*}
holds.
\end{cor}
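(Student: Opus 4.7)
The corollary is the scalar special case of Theorem~\ref{vecapfvar}, so the strategy is simply to apply that theorem to a one-term sequence. Concretely, given $f$, set $f_1=f$ and $f_j=0$ for all $j\neq 1$. Then the left-hand side becomes
\[
\left(\sum_j (\mathcal{V}f_j)^\rho\right)^{1/\rho}=\mathcal{V}f,
\]
and the right-hand side becomes
\[
\left(\sum_j |f_j|^\rho\right)^{1/\rho}=|f|.
\]
Substituting into the conclusion of Theorem~\ref{vecapfvar} immediately yields
\[
\|\mathcal{V}f\|_{L^p(w)}\leq C_{p,\rho}(w)\|f\|_{L^p(w)}
\]
under the stated hypotheses on $w$ and $p$, and likewise
\[
w(\{x:\mathcal{V}f(x)>\lambda\})\leq C_\rho(w)\frac{1}{\lambda}\int |f(x)|w(x)\,dx
\]
whenever $w^{r'}\in A_1$.

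There is no substantial obstacle here; the result is a direct specialization, and the value of $\rho$ drops out of the scalar statement (it only governs the exterior $\ell^\rho$-norm over the $j$ index, which is trivial when there is just one term). The only minor point worth noting is that the constants $C_{p,\rho}(w)$ and $C_\rho(w)$ inherited from Theorem~\ref{vecapfvar} still formally depend on the auxiliary parameter $\rho\in(1,\infty)$; one may fix any convenient value of $\rho$ (for instance $\rho=2$) to obtain a bona fide constant depending only on $p$ and $w$. With this understood, the proof is a single-line reduction to Theorem~\ref{vecapfvar}.
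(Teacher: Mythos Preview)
Your proposal is correct and matches the paper's approach: the paper presents this result as an immediate corollary of Theorem~\ref{vecapfvar} without giving a separate proof, and your one-line reduction (take $f_1=f$, $f_j=0$ for $j\neq 1$) is exactly how one reads off the scalar case from the vector-valued theorem. Your additional remark about fixing $\rho$ to remove the residual dependence of the constant on $\rho$ is a nice clarification that the paper does not spell out.
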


\vspace{1cm}
\noindent
Sakin Demir\\
E-mail: sakin.demir@gmail.com\\
Address:\\
İbrahim Çeçen University\\
Faculty of Education\\
04100 Ağrı, TURKEY.

\end{document}